\documentclass{article}
\usepackage{graphicx} \usepackage{amsmath} \usepackage{amssymb}
\usepackage{amsthm}
\usepackage{xcolor} \usepackage{appendix}
\usepackage{relsize}
\usepackage{hyperref}
\usepackage{comment}

\newtheorem{lemma}{Lemma}[section]
\newtheorem{definition}{Definition}[section]
\newtheorem{thm}[lemma]{Theorem}

\newtheorem{cor}[lemma]{Corollary}

\newtheorem*{remark}{Remark}

\newtheorem*{thm*}{Theorem}
\newtheorem*{lemma*}{Lemma}
\newtheorem*{cor*}{Corollary}
\newtheorem*{prop*}{Proposition}
\newtheorem*{definition*}{Definition}

\definecolor{darkgreen}{rgb}{0.0, 0.5, 0.0}
\definecolor{darkred}{rgb}{0.7, 0.0, 0.0}

\newcommand{\sqS}{\tilde{S}}

\newcommand{\Lmd}{\Lambda}

\newcommand{\Lt}{_{L^2}}
\newcommand{\Ltsq}{_{L^2}^2}
\newcommand{\Ltlt}{_{L^2l^2}}
\newcommand{\Ltltsq}{_{L^2l^2}^2}
\newcommand{\supp}{\mathrm{supp}}

\newcommand{\norm}{|\!|}
\newcommand{\Norm}[1]{\left|\!\left|#1\right|\!\right|}

\newcommand{\F}{\mathcal{F}}

\newcommand{\N}{{\mathbb{N}}}
\newcommand{\R}{{\mathbb{R}}}

\newcommand{\Cmplx}{{\mathbb{C}}}

\title{Translation-Invariant Behavior of General Scattering Transforms}
\author{Wojciech Czaja, Brandon Kolstoe, and David Koralov\footnote{Corresponding author - dkoralov@terpmail.umd.edu}}

\date{June 2024}

\begin{document}

\maketitle
\begin{abstract}
    The main result of our paper offers an alternative, simpler, proof of Mallat's result on the translation invariance of the limiting behavior of sequences of Wavelet Scattering Transforms, which (unlike Mallat's proof) does not rely on the admissibility condition or on the density of a logarithmic Sobolev space in $L^2$. Furthermore, this result is generalized to a broader class of scattering transforms, including, for instance, a modification of the Fourier Scattering Transform. As a result, we also prove a new upper bound for the translation contraction for the Fourier Scattering Transform.
\end{abstract}
 
\section{Introduction}
In this paper, we will deal with scattering transforms\footnote{In this paper, scattering transforms are not necessarily wavelet based. When referring to Mallat's Wavelet Scattering Transform, we will do so explicitly.} - a type of transformation used in the field of pattern recognition and classification of audio and image data. When used in practice, a scattering transform $\mathcal{S}$ is applied to images and audio data prior to training a neural network for classification. Once the neural network is trained (and a satisfactory neural network function $\mathcal{N}$ is obtained), the resulting classifier is taken to be $\mathcal{N}\circ\mathcal{S}$. Scattering transforms embed audio/image data, theoretically representable as $L^2(\R^d)$ functions, (with $d=1$ for audio signals and $d=2$ for images)\footnote{In general, scattering transforms are defined for arbitrary finite-dimensional spaces $\R^d$.} in the space of countable collections of $L^2(\mathbb{R}^d)$ functions, endowed with the $L^2l^2$ norm. When used in practice, $L^2(\R^d)$ functions are stored as either vectors or matrices of their values on a discrete lattice, and the space $L^2l^2(\R^d)$ is represented with finite sequences of such vectors/matrices.

For certain scattering transforms, including the Wavelet Scattering Transform \cite{Mallat} and the Fourier Scattering Transform \cite{CzajaLi}, it has been proved that the embedding of functions into the $L^2l^2$ space preserves their norm, and exhibits near-translation-invariant and near-diffeomorphism-invariant behavior.\footnote{These properties are not unique to the scattering transform. For example, taking the $L^2$ norm of a function also has all three of those properties. However, the $L^2$-norm-operator ($\Norm{\cdot}_{L^2}: L^2(\R^n)\rightarrow \R$) loses an enormous amount of valuable information on natural images that scattering transforms preserve. It should be noted, nonetheless, that scattering transforms are not perfectly injective.} In practice, these properties have shown themselves to be very useful in audio and image processing problems, as this allows a neural network to spend less resources on learning redundant patterns, and instead learn complex patterns more efficiently with less training data.

The focus of this paper (Theorem \ref{MAIN RESULT - UPPER BOUND} and Corollary \ref{Main Result main_result (FINAL)}) is demonstrating that Mallat's proof (in \cite{Mallat}) of translation-invariant behavior of the Wavelet Scattering Transform\footnote{The proof that the norm of the difference of $S_J[P_J] f(\cdot)$ and $S_J[P_J] f(\cdot+c)$ goes to $0$ as $J\rightarrow\infty$.} can be replaced with a  simpler proof, which does not require the intricate ``admissibility condition" (see \cite{Mallat}) and is applicable to a broader range of scattering transforms\footnote{Including scattering transforms that are not based on wavelet frames.}. Our main motivation for writing the paper was to prove Corollary \ref{Main Result main_result (FINAL)} for coherent sequences of Fourier Scattering Transforms, defined in \cite{CzajaLi}.

\section{Preliminaries}

\subsection{Definition of the Wavelet Scattering Transform}
Let us recall the definition of the Wavelet Scattering Transform, introduced by Mallat in \cite{Mallat}.

\begin{definition}\textbf{(Wavelets)}
        Let $G$ be a finite group of rotations on $\mathbb{R}^d$, together with reflection about the origin. Let $\psi:\mathbb{R}^d\rightarrow\Cmplx$ be a $L^1\cap L^2$ function such that $\widehat\psi$ has compact support, and 
    \begin{equation}\label{Wavelet_frame_property_motheronly}
        \sum_{r\in G}\sum_{j=-\infty}^{\infty}\left|\widehat\psi(2^{j}r\xi)\right|^2=1 \textrm{ a.e.}
    \end{equation}
    \noindent Furthermore, let $\phi$ be an $L^2$ function such that 
    \begin{equation}\label{father_wavelet_def}
        \sum_{r\in G}\sum_{j=-\infty}^{0}\left|\widehat\psi(2^{j}r\xi)\right|^2=\left|\widehat{\phi}(\xi)\right|^2\textrm{ a.e.}
    \end{equation}
    
    \noindent The function $\psi$ is called the mother wavelet, and the function $\phi$ is called the father wavelet.
\end{definition}

 \noindent It follows immediately from (\ref{father_wavelet_def}) that, given a mother wavelet $\psi$ that satisfies
(\ref{Wavelet_frame_property_motheronly}), the absolute value of the Fourier transform of the father wavelet, $|\widehat\phi|$, is determined uniquely.

\noindent Let us denote $\phi_{2^J}(\xi):=2^{-dJ}\phi(2^{-J}\xi)$ and $\psi_{2^j,r}(x):=2^{dj}\psi(2^jr^{-1}x).$
Let $\Lmd_J$ denote the set $$\Lmd_J=\left\{2^jr|j>-J\right\},\textrm{and let }\Lmd_\infty:=\bigcup_{J=-\infty}^{\infty} \Lmd_J.$$
For $J\in\mathbb{Z}\cup\{\infty\}$, let $${P}_J:=\bigcup_{k=0}^{\infty} \Lmd_J^k,$$ where $$\Lmd_J^k:=\Lmd_J\times....\times\Lmd_J\textrm{ (k times).}$$

\begin{definition} \textbf{(Scattering Propagator for Wavelets \cite{Mallat})}\\For each $p\in\Lmd_\infty^k, p=(\lambda_1,\lambda_2,...,\lambda_k)$, inductively define the scattering propagator \begin{equation*}
    U[p]f:=
    \begin{cases}
        f & \text{if k=0,}\\
        \mathlarger|U[\lambda_1,\lambda_2,...,\lambda_{k-1}]f*\psi_{\lambda_k}\mathlarger| & \text{if $k\geq 1$,}
    \end{cases}
\end{equation*}
for functions $f\in L^2.$
    
\end{definition}
\begin{definition}
    For any $R\subseteq P_\infty$, we define
    $$U[R]f:=\{U[p]f,p\in R\}$$
    and
    $$S_J[R]f:=\{U[p]f*\phi_{2^J},p\in R\}$$
    
\end{definition}
\begin{definition}\textbf{(Wavelet Scattering Transform \cite{Mallat})}\\
    The (windowed) Wavelet Scattering Transform of a function $f\in L^2$ is defined as 
$$S_J[{P}_J]f:=\{U[p]f*\phi_{2^J},p\in {P}_J\}.$$
\end{definition}

\begin{remark}
    The ordinary wavelet transform is denoted by $W[\Lmd_{\infty}]$. Thus, $$W[\Lmd_\infty]f:=\left\{f*\psi_{\lambda}\textrm{ s.t. }\lambda\in\Lmd_\infty\right\}.$$
    The operator of convolution with $\phi_{2^J}$ is denoted by $A_J$. Thus,
    $$A_Jf:=f*\phi_{2^J}.$$
    Since the ordinary wavelet transform, the absolute value operator, and the operator of convolution with $\phi_{2^J}$ are all Lipschitz with Lipschitz constant less than or equal to $1$, one can conclude that the Wavelet Scattering Transform is also non-expansive (i.e., Lipschitz with constant $\leq$ 1). 
\end{remark}

\begin{remark}
    We denote
    $$U_J[\Lambda_J]f:=\left\{A_Jf,\left|f*\psi_{\lambda}\right|{\textrm{ s.t. }}\lambda\in\Lmd_{J}\right\}.$$
\end{remark}

\subsection{Properties of the Wavelet Scattering Transform}
In \cite{Mallat}, Mallat proved that, for wavelets satisfying a certain admissibility condition\footnote{Although there has been some research (see \cite{Waldspurger}, for example) attempting to relax the admissibility condition or find an example of an admissible wavelet, we are not aware of any results demonstrating an example of an admissible wavelet for dimensions $d\geq 2$.},
$$\norm S_J[{P}_J]f\norm\Ltlt=\norm f\norm_{L^2}.$$

We will include the exact statement of Mallat's theorem below.

\begin{definition}\label{Admissibility_condition}\textbf{(Admissibility Condition \cite{Mallat})}\\
    A mother wavelet $\psi$ is said to be admissible if there are $\eta\in \mathbb{R}^d$ and $\rho\geq 0$ such that 
    $|\widehat\rho(\xi)|\leq|\widehat\phi(2\xi)|$ and $\widehat\rho(0)=1,$ such that the function 
    $$\widehat\Psi(\xi):=|\widehat\rho(\xi-\eta)|^2-\sum_{k=1}^\infty k(1-|\widehat\rho(2^{-k}(\xi-\eta))|^2)$$
    satisfies
    $$\alpha:=\inf_{1\leq|\xi|\leq 2}\sum_{j=-\infty}^{\infty}\sum_{r\in G} \widehat\Psi(2^{-j}r^{-1}\xi)|\widehat\psi(2^{-j}r^{-1}\xi)|^2> 0.$$
\end{definition}

\begin{thm} \textbf{(Norm Preservation \cite{Mallat})}
    Suppose that an admissible wavelet $\psi$ satisfies (\ref{Wavelet_frame_property_motheronly}) and (\ref{father_wavelet_def}). Then, for each $f\in L^2(\mathbb{R}^d),$ we have that
    $$\lim_{m\rightarrow\infty}\norm U[\Lambda_J^m]f\norm\Ltltsq=\lim_{m\rightarrow\infty}\sum_{n=m}^\infty\norm S_{J}[\Lambda_J^n]f\norm\Ltltsq=0$$
    and
    $$\norm S_J[{P}_J]f\norm\Ltlt = \norm f\norm\Lt.$$
\end{thm}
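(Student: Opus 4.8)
The statement to prove is Mallat's norm‑preservation theorem: for an admissible wavelet, $\lim_{m\to\infty}\|U[\Lambda_J^m]f\|_{L^2l^2}^2 = \lim_{m\to\infty}\sum_{n=m}^\infty\|S_J[\Lambda_J^n]f\|_{L^2l^2}^2 = 0$, and consequently $\|S_J[P_J]f\|_{L^2l^2} = \|f\|_{L^2}$.

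Let me think about how I would actually prove this.

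First, the easy direction — the energy identity at each layer. The key observation is that for the one-step propagator $U_J[\Lambda_J]$, which sends $f$ to $\{A_J f\} \cup \{|f*\psi_\lambda| : \lambda \in \Lambda_J\}$, we have a Parseval-type identity. Indeed, $\||f*\psi_\lambda|\|_{L^2}^2 = \|f*\psi_\lambda\|_{L^2}^2$ since modulus preserves $L^2$ norm pointwise. So $\|U_J[\Lambda_J]f\|^2 = \|A_J f\|^2 + \sum_{\lambda\in\Lambda_J}\|f*\psi_\lambda\|^2$. By Plancherel this equals $\int |\hat f(\xi)|^2(|\hat\phi_{2^J}(\xi)|^2 + \sum_{\lambda\in\Lambda_J}|\hat\psi_\lambda(\xi)|^2)d\xi$. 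Now $\hat\phi_{2^J}(\xi) = \hat\phi(2^J\xi)$ and the wavelet frame conditions (\ref{Wavelet_frame_property_motheronly}), (\ref{father_wavelet_def}) tell us exactly that $|\hat\phi(2^J\xi)|^2 + \sum_{\lambda\in\Lambda_J}|\hat\psi_\lambda(\xi)|^2 = 1$ a.e. — the father wavelet captures precisely the low-frequency scales $j \le -J$ and $\Lambda_J$ captures the scales $j > -J$. So $\|U_J[\Lambda_J]f\|^2 = \|f\|^2$: the one-step propagator is exactly norm-preserving.

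Now iterate. Write $\|U_J[\Lambda_J]f\|^2 = \|A_J f\|^2 + \|U[\Lambda_J]f\|^2$ where $U[\Lambda_J]f = \{|f*\psi_\lambda|\}$ is the "propagated" part (no averaging). Applying the identity to each $U[p]f$ with $p \in \Lambda_J^m$ and summing, we get $\|U[\Lambda_J^m]f\|^2 = \|S_J[\Lambda_J^m]f\|^2 + \|U[\Lambda_J^{m+1}]f\|^2$. Telescoping from $m=0$: $\|f\|^2 = \sum_{n=0}^{m-1}\|S_J[\Lambda_J^n]f\|^2 + \|U[\Lambda_J^m]f\|^2$. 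Hence the partial sums $\sum_{n=0}^{m-1}\|S_J[\Lambda_J^n]f\|^2$ are increasing and bounded by $\|f\|^2$, so they converge, and $\|S_J[P_J]f\|^2 = \sum_{n=0}^\infty \|S_J[\Lambda_J^n]f\|^2 \le \|f\|^2$ with equality if and only if $\|U[\Lambda_J^m]f\|^2 \to 0$. So everything reduces to proving $\|U[\Lambda_J^m]f\|^2 \to 0$.

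**The hard part — the energy escapes.** This is where admissibility enters, and it is the genuine obstacle. The claim $\|U[\Lambda_J^m]f\|^2\to 0$ says all the energy eventually flows into the averaged (low-frequency) channels. The standard argument: since $\{U[\Lambda_J^m]f\}_m$ has decreasing norm, it suffices to show that on any subsequence where the norm stays bounded below, one derives a contradiction; more precisely one shows that the energy cannot stay concentrated in the propagated channels because each application of $|\cdot * \psi_\lambda|$ followed by the frame decomposition tends to push mass toward frequency $0$ (the modulus of an oscillating function has a nonzero mean — a low-frequency component). Mallat's admissibility condition is precisely the quantitative device that makes "the modulus creates a definite amount of low-frequency energy at each step" rigorous: the function $\hat\Psi$ with $\alpha > 0$ lower-bounds the mass transferred to the averaging operator $A_J$ at each iteration. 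The plan would be to establish, for each $U[p]f$, an inequality of the shape $\|A_J U[p]f\|^2 \ge$ (something involving $\alpha$) $\cdot\, \|U[p]f\|^2_{\text{restricted to a frequency annulus}}$, or more cleanly a telescoping inequality forcing $\sum_m \|S_J[\Lambda_J^m]f\|^2$ to capture the full norm; then combine with the telescoping identity above. I expect this step — reproducing Mallat's admissibility-based energy-propagation estimate — to be the main technical burden, and it is exactly the step the present paper claims to circumvent for its own (different) translation-invariance result, so here we simply cite and reproduce Mallat's argument rather than improve it.
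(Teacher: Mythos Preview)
The paper does not give its own proof of this theorem at all: it is stated as a result of Mallat and accompanied only by the one-line remark that ``the difficult part of this theorem is that the first limit is equal to zero, with the rest then following from this and the non-expansive property.'' Your proposal is therefore already more detailed than the paper's treatment. Your derivation of the layerwise Parseval identity $\|U_J[\Lambda_J]g\|^2=\|g\|^2$ from (\ref{Wavelet_frame_property_motheronly})--(\ref{father_wavelet_def}) and the resulting telescoping relation
\[
\|f\|_{L^2}^2=\sum_{n=0}^{m-1}\|S_J[\Lambda_J^n]f\|_{L^2l^2}^2+\|U[\Lambda_J^m]f\|_{L^2l^2}^2
\]
is exactly the standard reduction, and it is consistent with (indeed a fleshing-out of) the paper's remark.

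Where your proposal stops short is the same place the paper does: you correctly identify that the entire content lies in showing $\|U[\Lambda_J^m]f\|_{L^2l^2}^2\to 0$ via the admissibility condition, and you describe in words the mechanism (the modulus shifts energy toward low frequencies, and $\alpha>0$ quantifies a uniform lower bound on this transfer), but you do not carry out the actual estimate. Your final sentence (``here we simply cite and reproduce Mallat's argument'') is, in effect, precisely what the paper does. So there is no discrepancy in approach; both you and the paper defer the substantive step to \cite{Mallat}. If the intent was to supply a self-contained proof, the gap is that the admissibility-based energy-propagation inequality is asserted but not established.
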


The difficult part of this theorem is that the first limit is equal to zero, with the rest then following from this and the non-expansive property. Mallat also proved the following result about the behavior of the scattering transform as the parameter $J$ increases.

\begin{thm}\textbf{(Limiting Translation Invariance \cite{Mallat})}\label{Theorem 2.10 Mallat}
    For admissible scattering wavelets, for each $f\in L^2(\mathbb{R}^d)$ and $c\in \R^d$,
    $$\lim_{J\rightarrow\infty}\Norm{ S_J[P_J]f- S_J[P_J]T_cf}\Ltlt =0,$$
    where $T_cf:=f(\cdot-c).$
\end{thm}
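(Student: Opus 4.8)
The plan is to separate two phenomena that have nothing to do with admissibility: the scattering propagator is translation–equivariant, and the windowing operator $\AJ:g\mapsto g*\phi_{2^J}$ becomes blind to a fixed translation as $J\to\infty$; admissibility will enter only — through the Norm Preservation theorem above — to discard a tail. First, equivariance: convolution with a fixed filter and the pointwise modulus each commute with $T_c$, hence $U[p]$ commutes with $T_c$ for every path $p$, and so does $\AJ$. Therefore $S_J[P_J]T_cf = T_c\,S_J[P_J]f$ componentwise, so that
\[
\Norm{S_J[P_J]f - S_J[P_J]T_cf}\Ltltsq = \sum_{p\in P_J}\Norm{(\identity-T_c)\AJ\,U[p]f}\Ltsq = \sum_{n=0}^{\infty}\Norm{(\identity-T_c)S_J[\Lambda_J^{n}]f}\Ltltsq .
\]

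The key estimate is a one–line Fourier computation. For $g\in L^2$, Plancherel gives $\Norm{(\identity-T_c)\AJ g}\Ltsq = \int\bigl|1-e^{-ic\cdot\xi}\bigr|^{2}|\widehat g(\xi)|^{2}|\widehat\phi(2^{J}\xi)|^{2}\,d\xi \le \kappa_J^{2}\Norm g\Ltsq$, where $\kappa_J := \sup_{\eta}\bigl|1-e^{-i2^{-J}c\cdot\eta}\bigr|\,|\widehat\phi(\eta)|$. Splitting the supremum at $|\eta| = R$ and using $|1-e^{-it}|\le|t|$ gives $\kappa_J \le 2^{-J}|c|\,R\,\Norm{\widehat\phi}_{L^\infty} + 2\sup_{|\eta|>R}|\widehat\phi(\eta)|$ for every $R$, so $\kappa_J\to 0$ because $\widehat\phi$ is bounded and vanishes at infinity (indeed $\widehat\phi$ is compactly supported in Mallat's construction, which even yields $\kappa_J = O(2^{-J})$). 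This single estimate does the real work and replaces the admissibility argument: $T_c$ acts almost trivially on the scale–$2^J$ low–frequency content that survives $\AJ$.

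Now truncate the sum at a level $N$. For the tail the only available bound is the crude $\Norm{(\identity-T_c)h}\Lt\le 2\Norm h\Lt$; combined with $S_J[\Lambda_J^{n}]f = \AJ U[\Lambda_J^{n}]f$ and the non-expansiveness of the wavelet–modulus layers it gives, for every $J$,
\[
\sum_{n\ge N}\Norm{(\identity-T_c)S_J[\Lambda_J^{n}]f}\Ltltsq \le 4\sum_{n\ge N}\Norm{S_J[\Lambda_J^{n}]f}\Ltltsq \le 4\Norm{U[\Lambda_J^{N}]f}\Ltltsq \le 4\Norm{U[\Lambda_\infty^{N}]f}\Ltltsq ,
\]
the last step because $\Lambda_J^{N}\subseteq\Lambda_\infty^{N}$ and all summands are nonnegative; by the Norm Preservation theorem with $\Lambda_\infty$ in place of $\Lambda_J$, this tends to $0$ as $N\to\infty$, uniformly in $J$. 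For the remaining finitely many levels, the key estimate applied componentwise and the non-expansive bound $\Norm{U[\Lambda_J^{n}]f}\Ltlt\le\Norm f\Lt$ give $\sum_{n<N}\Norm{(\identity-T_c)S_J[\Lambda_J^{n}]f}\Ltltsq \le N\kappa_J^{2}\Norm f\Ltsq \to 0$ as $J\to\infty$. Hence, given $\varepsilon>0$, choose $N$ so that the tail is $<\varepsilon/2$ for every $J$, then take $J$ large so that the finite part is $<\varepsilon/2$; this proves the theorem.

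The tail is the only real obstacle. The key estimate is worthless there, since $\kappa_J^{2}\sum_{p\in P_J}\Norm{U[p]f}\Ltsq$ may be infinite, while the crude tail bound $4\Norm{U[\Lambda_J^{N}]f}\Ltltsq$ is \emph{not} small for fixed $N$ — it converges to $\Norm f\Ltsq$ as $J\to\infty$. The point is to make it small \emph{uniformly in $J$} before letting $J\to\infty$, which is exactly what the monotonicity $\Lambda_J^{N}\subseteq\Lambda_\infty^{N}$ together with Norm Preservation delivers, and this is the sole use of admissibility. (For the fully admissibility–free statement one instead reads off the quantitative bound $\Norm{S_J[P_J]f - S_J[P_J]T_cf}\Ltlt \le \kappa_J\,\bigl(\sum_{p\in P_\infty}\Norm{U[p]f}\Ltsq\bigr)^{1/2}$ directly from the key estimate on a suitable dense class of $f$ for which the right–hand side is finite, and extends to all of $L^2$ via non-expansiveness.)
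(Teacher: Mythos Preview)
Your tail argument has a genuine gap. You bound $\Norm{U[\Lambda_J^N]f}\Ltltsq \le \Norm{U[\Lambda_\infty^N]f}\Ltltsq$ and then assert that the right-hand side tends to $0$ as $N\to\infty$ ``by the Norm Preservation theorem with $\Lambda_\infty$ in place of $\Lambda_J$.'' But for $\Lambda_\infty$ there is no low-pass output at all: by (\ref{Wavelet_frame_property_motheronly}) the family $\{\psi_\lambda\}_{\lambda\in\Lambda_\infty}$ is already a semi-discrete Parseval frame, so one layer of the propagator preserves energy exactly, and inductively $\Norm{U[\Lambda_\infty^N]f}\Ltltsq=\Norm{f}\Ltsq$ for every $N\ge 0$. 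The Norm Preservation theorem gives $\Norm{U[\Lambda_J^N]f}\Ltltsq\to 0$ only for each \emph{fixed} finite $J$, and this convergence is not uniform in $J$ --- it fails in the limit $J=\infty$ --- so your uniform-in-$J$ tail bound is not established. You even flag this danger (``it converges to $\Norm{f}\Ltsq$ as $J\to\infty$'') and then fall into it on the next line.

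The paper sidesteps the tail entirely by a sharper version of your own key estimate. In your Plancherel line you absorb $|\widehat\phi(2^J\xi)|$ into $\kappa_J$ and are left with $\Norm{g}\Ltsq$ on the right; summing over paths then forces you to control $\sum_{p\in P_J}\Norm{U[p]f}\Ltsq$, which need not be finite. The paper instead uses only the compact support of $\widehat{g_0}$ to bound the factor $|1-e^{-2\pi i\xi\cdot c}|$ by $2\pi D|c|$, and keeps $|\widehat{u_p}(\xi)|^2|\widehat{g_0}(\xi)|^2$ together. Summing over $p$ then gives $(2\pi D|c|)^2\sum_p\Norm{u_p*g_0}\Ltsq=(2\pi D|c|)^2\Norm{\tilde S[P]f}\Ltltsq\le(2\pi D|c|)^2\Norm{f}\Ltsq$ by non-expansiveness alone --- no truncation, no tail, and no admissibility. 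That one-line change is exactly what makes the argument go through, and what lets it extend beyond wavelets.
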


 \subsection{The General Scattering Transform}

Scattering Transforms, in their most general sense, were defined by T. Wiatowski and H. B{\"o}lcskei in \cite{Wiatowski}. For our purposes, we will use a slightly more restricted, but still very general definition. In particular, our definition of a scattering transform (a mapping $S:L^2(\R^d)\rightarrow L^2l^2(\R^d)$) will be satisfied if $S$ satisfies Definition 3 in \cite{Wiatowski} for a frame collection $\Psi$ with the same frame at every layer (i.e., if $\Psi_i = \Psi_j $ for all $ i,j$), and if the upper frame bound for this frame is $\leq 1$. Unlike \cite{Wiatowski}, we do not require the existence of a lower frame bound. Let us introduce the notion of a scattering transform more precisely.

    \begin{definition}\label{semi-discrete Bessel Sequence}\textbf{(Semi-Discrete Bessel Sequence)}
        A Semi-Discrete Bessel Sequence\footnote{If the expression in formula (\ref{General_scattering_upper_frame_bound}) has a lower bound $A>0$, then the semi-discrete Bessel Sequence becomes a semi-discrete frame.} is a sequence $\mathcal{G}$ of $L^1\cap L^2$ functions such that    \begin{equation}\label{General_scattering_upper_frame_bound}
            \sum_{g\in\mathcal{G}} \left|\hat{g}(\xi)\right|^2\leq 1 \; a.e.
        \end{equation}\label{frame_property_general}

    \end{definition}
    \begin{definition}\textbf{(Index Set and output-generating function)}
        Suppose that for a semi-discrete Bessel Sequence $\mathcal{G}$ and a set $\Lmd$ (with $0\not\in\Lmd$)  we have$$\mathcal{G}=\{g_\lambda\}_{\lambda\in\Lmd}\cup\{g_0\}.$$ Then, we say that $\mathcal{G}$ has the index set $\Lmd$ and the output-generating function $g_0$. We refer to the set $\{g_\lambda\}_{\lambda\in\Lmd}$ as the set of peripheral elements of $\mathcal{G}$.
    \end{definition}

    \begin{definition} \textbf{(Scattering Propagator)}Let $\mathcal{G}$ be a semi-discrete Bessel Sequence with index set $\Lambda$. For each $p\in\Lambda^k, p=(\lambda_1,\lambda_2,...,\lambda_k)$, we inductively define the scattering propagator 
    \begin{equation*}
        \tilde{U}[p]f:=
        \begin{cases}
            f & \text{if }k=0,\\
            \left|\tilde{U}[\lambda_1,\lambda_2,...,\lambda_{k-1}]f*g_{\lambda_k}\right| & \text{if } k\geq 1,
        \end{cases}
    \end{equation*}
    for functions $f\in L^2.$
        
    \end{definition}

\begin{thm}
    $\tilde{U}$ is a non-expansive operator $L^2\rightarrow L^2l^2$.
\end{thm}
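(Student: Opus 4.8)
The plan is to reduce the claim to one Fourier-side estimate built on Plancherel's theorem and the Bessel bound (\ref{General_scattering_upper_frame_bound}). Here $\tilde{U}$ is the first-layer propagator, sending $f\in L^2$ to the family $\tilde{U}f=\{\,|f*g_\lambda|\,\}_{\lambda\in\Lambda}$; if one prefers to also attach an output coordinate $f*g_0$, the argument below only becomes sharper, since then an extra nonnegative term $|\widehat{g_0}(\xi)|^2$ appears on the left of (\ref{General_scattering_upper_frame_bound}) which we would simply discard. Since the modulus is not linear, ``non-expansive'' has to be read as the Lipschitz estimate $\Norm{\tilde{U}f-\tilde{U}h}\Ltlt\le\Norm{f-h}\Lt$ for all $f,h\in L^2$, so the first thing I would record is the elementary pointwise inequality $\big|\,|a|-|b|\,\big|\le|a-b|$ valid for all $a,b\in\Cmplx$. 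Applying this with $a=(f*g_\lambda)(x)$ and $b=(h*g_\lambda)(x)$ and then integrating in $x$ gives, for each $\lambda\in\Lambda$,
$$\Norm{\,|f*g_\lambda|-|h*g_\lambda|\,}\Ltsq\le\Norm{(f-h)*g_\lambda}\Ltsq .$$
Summing over $\lambda$, it therefore suffices to prove $\sum_{\lambda\in\Lambda}\Norm{u*g_\lambda}\Ltsq\le\Norm{u}\Ltsq$ for every $u\in L^2$, i.e.\ that the \emph{linear} operator $u\mapsto\{u*g_\lambda\}_{\lambda\in\Lambda}$ is non-expansive; the result then follows by taking $u=f-h$.

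For that linear bound I would pass to the frequency side. Since each $g_\lambda\in L^1\cap L^2$, the convolution $u*g_\lambda$ lies in $L^2$ and satisfies $\widehat{u*g_\lambda}=\widehat u\,\widehat{g_\lambda}$, so Plancherel gives $\Norm{u*g_\lambda}\Ltsq=\int_{\R^d}|\widehat u(\xi)|^2|\widehat{g_\lambda}(\xi)|^2\,d\xi$. Summing over $\lambda$ and interchanging the sum with the integral --- legitimate by Tonelli's theorem because every integrand is nonnegative --- yields
$$\sum_{\lambda\in\Lambda}\Norm{u*g_\lambda}\Ltsq=\int_{\R^d}|\widehat u(\xi)|^2\Big(\sum_{\lambda\in\Lambda}|\widehat{g_\lambda}(\xi)|^2\Big)\,d\xi\le\int_{\R^d}|\widehat u(\xi)|^2\,d\xi=\Norm{u}\Ltsq ,$$
where the middle inequality is precisely (\ref{General_scattering_upper_frame_bound}): since $\mathcal{G}=\{g_\lambda\}_{\lambda\in\Lambda}\cup\{g_0\}$, we have $\sum_{\lambda\in\Lambda}|\widehat{g_\lambda}(\xi)|^2\le\sum_{g\in\mathcal{G}}|\widehat g(\xi)|^2\le 1$ for almost every $\xi$.

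Chaining these two displays with $u=f-h$ and taking square roots gives $\Norm{\tilde{U}f-\tilde{U}h}\Ltlt\le\Norm{f-h}\Lt$; taking $h=0$ (and noting $\tilde{U}0=0$) shows in particular that $\tilde{U}$ maps $L^2$ into $L^2l^2$, so it is a genuine non-expansive operator. I do not anticipate any real obstacle here --- morally this is just the Bessel inequality on the Fourier side. The only two points deserving a word of care are: (i) the modulus is a pointwise contraction on $\Cmplx$-valued functions, which is the reverse triangle inequality and is exactly why the argument must begin there rather than from linearity; and (ii) the interchange of summation and integration, which is automatic from nonnegativity by Tonelli. All remaining steps are standard facts about convolutions and Plancherel's identity.
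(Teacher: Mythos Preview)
Your argument is correct and is exactly the standard proof the paper defers to: the paper does not spell out its own proof but states that it is identical to Mallat's, and Mallat's argument is precisely the reverse-triangle-inequality reduction followed by Plancherel and the semi-discrete Bessel bound that you wrote out. The only cosmetic point is that in the paper's convention the one-step propagator also carries the output coordinate $f*g_0$, but as you note this is absorbed by using the full sum $\sum_{g\in\mathcal{G}}|\widehat g|^2\le 1$ rather than the subsum over $\lambda\in\Lambda$.
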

The proof of why this is true is identical to Mallat's proof in \cite{Mallat} of why $U_J$ is non-expansive. This theoerm is also proved in \cite{WiatowskiBIG} and \cite{Balan}.

\begin{definition}\label{General Scattering Transform}
    Let $\mathcal{G}$ be a semi-discrete Bessel Sequence with index set $\Lambda$ and output-generating function $g_0$. Let $P:=\bigcup_{i=0}^{\infty} \Lambda^i$.
    The General Scattering Transform of a function $f\in L^2$ is defined as 
    $$\tilde{S}[{P}]f:=\{\tilde{U}[p]f*g_0,p\in P\}.$$
\end{definition}

The proof of the following theorem is also similar to the proof of the corresponding results in \cite{Mallat} and is provided in \cite{Wiatowski} and \cite{WiatowskiBIG}.
\begin{thm}\label{sqSJ nonexpansive}  Let $\mathcal{G}$ be a semi-discrete Bessel Sequence with index set $\Lambda$ and output-generating function $g_0$. Then ${\tilde{S}[P]}$ is a non-expansive operator.
In particular,
    $$\Norm{\tilde{S}[P]f}\Ltlt\leq\Norm{f}\Lt.$$

\end{thm}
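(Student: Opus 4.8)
The plan is to reduce the non-expansiveness of $\tilde S[P]$ to the non-expansiveness of a single layer's worth of operations, and then sum the resulting inequalities over all depths $k$. First I would fix the building block: for a semi-discrete Bessel sequence $\mathcal G = \{g_\lambda\}_{\lambda\in\Lambda}\cup\{g_0\}$ with $\sum_{g\in\mathcal G}|\hat g(\xi)|^2\le 1$ a.e., Plancherel gives, for any $h\in L^2$,
\begin{equation*}
    \Norm{h*g_0}\Ltsq + \sum_{\lambda\in\Lambda}\Norm{h*g_\lambda}\Ltsq
    = \int \Bigl(|\hat g_0(\xi)|^2 + \sum_{\lambda\in\Lambda}|\hat g_\lambda(\xi)|^2\Bigr)|\hat h(\xi)|^2\,d\xi
    \le \Norm{h}\Ltsq .
\end{equation*}
Since $\bigl|\,|\tilde U[\lambda_1,\dots,\lambda_{k-1}]f*g_\lambda|\,\bigr| = |\tilde U[\lambda_1,\dots,\lambda_{k-1}]f*g_\lambda|$ pointwise, applying this with $h = \tilde U[q]f$ for $q\in\Lambda^{k-1}$ yields
\begin{equation*}
    \Norm{\tilde U[q]f*g_0}\Ltsq + \sum_{\lambda\in\Lambda}\Norm{\tilde U[(q,\lambda)]f}\Ltsq \le \Norm{\tilde U[q]f}\Ltsq .
\end{equation*}

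Next I would organize the output of $\tilde S[P]f$ by depth. Write $\Norm{\tilde S[P]f}\Ltltsq = \sum_{k=0}^\infty \sum_{q\in\Lambda^k}\Norm{\tilde U[q]f*g_0}\Ltsq$, and define the "surviving energy at depth $k$" by $e_k := \sum_{q\in\Lambda^k}\Norm{\tilde U[q]f}\Ltsq$, with $e_0 = \Norm{f}\Ltsq$. Summing the displayed one-layer inequality over all $q\in\Lambda^{k-1}$ and recognizing that every element of $\Lambda^k$ is uniquely of the form $(q,\lambda)$ with $q\in\Lambda^{k-1}$, $\lambda\in\Lambda$, gives the telescoping-type bound
\begin{equation*}
    \sum_{q\in\Lambda^{k-1}}\Norm{\tilde U[q]f*g_0}\Ltsq + e_k \le e_{k-1}.
\end{equation*}
Summing this over $k = 1,\dots,N$ and using $e_N \ge 0$ yields $\sum_{k=1}^{N}\sum_{q\in\Lambda^{k-1}}\Norm{\tilde U[q]f*g_0}\Ltsq \le e_0 = \Norm{f}\Ltsq$. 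Adding the $k=0$ term $\Norm{f*g_0}\Ltsq$ (which is itself bounded using the same Bessel inequality, or absorbed by starting the telescope at $e_0$) and letting $N\to\infty$ gives $\Norm{\tilde S[P]f}\Ltltsq \le \Norm{f}\Ltsq$, which is the claim.

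The only genuinely delicate point is the bookkeeping of the infinite sums: one must check that all rearrangements are of nonnegative terms (so Tonelli/monotone convergence applies with no convergence subtlety), that $\tilde U[q]f\in L^2$ for every $q$ (immediate by induction, since each layer is a composition of convolution with an $L^1\cap L^2$ function and the modulus, both bounded on $L^2$ — indeed norm-nonincreasing by the Bessel bound), and that the partial sums $e_N$ are monotone nonincreasing and hence the telescoping inequality is valid for every finite $N$. I would also remark that this is exactly the argument Mallat uses for the wavelet case, the $\mathcal G$-Bessel condition \eqref{General_scattering_upper_frame_bound} playing the role of the Littlewood–Paley identity \eqref{Wavelet_frame_property_motheronly}–\eqref{father_wavelet_def}; no lower frame bound is needed for this direction. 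This also matches the proofs in \cite{Wiatowski} and \cite{WiatowskiBIG}.
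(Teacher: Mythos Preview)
Your approach is the standard telescoping argument from Mallat and from Wiatowski--B{\"o}lcskei, and that is precisely what the paper does here: it does not write out a proof at all but simply refers to \cite{Mallat}, \cite{Wiatowski}, \cite{WiatowskiBIG}. So on the level of method you are aligned with the paper.

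There is, however, one genuine gap. The theorem asserts that $\tilde S[P]$ is \emph{non-expansive}, i.e.\ $\Norm{\tilde S[P]f-\tilde S[P]h}\Ltlt\le\Norm{f-h}\Lt$ for all $f,h\in L^2$, whereas your argument only establishes the special case $h=0$ (the norm bound in the ``In particular'' clause). Because $\tilde S[P]$ is nonlinear, the norm bound does not imply the Lipschitz bound. The fix is immediate: in your one-layer estimate use the reverse triangle inequality $\bigl|\,|a|-|b|\,\bigr|\le|a-b|$ in place of the trivial $\bigl|\,|a|\,\bigr|=|a|$, obtaining
\[
\Norm{\tilde U[q]f*g_0-\tilde U[q]h*g_0}\Ltsq+\sum_{\lambda\in\Lambda}\Norm{\tilde U[(q,\lambda)]f-\tilde U[(q,\lambda)]h}\Ltsq\le\Norm{\tilde U[q]f-\tilde U[q]h}\Ltsq,
\]
and then telescope with $e_k:=\sum_{q\in\Lambda^k}\Norm{\tilde U[q]f-\tilde U[q]h}\Ltsq$. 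Everything else in your write-up carries over verbatim.

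A minor bookkeeping remark: your sum $\sum_{k=1}^{N}\sum_{q\in\Lambda^{k-1}}\Norm{\tilde U[q]f*g_0}\Ltsq$ already contains the depth-zero output term (take $k=1$, $q$ the empty path), so there is nothing extra to ``add'' before letting $N\to\infty$.
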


\subsection{Sequences of General Scattering Transforms}

In \cite{Mallat}, the near-translation invariance is interpreted in terms of the limiting behavior of $S_J[P_J]$ as $J\rightarrow\infty$. Namely, Mallat considers a sequence of scattering transforms depending on a parameter $J$ that defines the size of the central element $\phi_{2^J}$. We introduce a corresponding notion for general scattering transforms.

\begin{definition}
    Let $\{\mathcal{G}_J\}_{J\in\mathbb{N}}$ be a sequence of semi-discrete Bessel Sequences with index sets $\Lmd_J$ and output-generating functions $g_{0,J}.$ Suppose that $\hat{g}_{0,J}$ is compactly supported for each $J$. We call the sequence of scattering transforms $\tilde{S}_J$ corresponding to the semi-discrete Bessel Sequences $\mathcal{G}_J,J\in\mathbb{N}$, ``coherent" if
    \begin{enumerate}
        \item $\mathcal{G}_J\setminus\{g_{0,J}\}\subseteq \mathcal{G}_{J+1}\setminus\{g_{0,{J+1}}\}$ for each $J$.
                \item $\lim_{J\rightarrow\infty} \sup(\{|\xi|\textrm{ s.t. }\hat{g}_{0,J}(\xi)\neq 0\})=0$.
    \end{enumerate}
\end{definition}
\begin{remark}
 The index sets $\Lmd_J$ for different $J$ are selected to be consistent with each other, in particular, $\Lmd_J\subseteq\Lmd_{J+1}$. The only element of $\mathcal{G}_J$ that might not be in $\mathcal{G}_{J+1}$ is the output-generating function $g_{0,J}$.
\end{remark}

We denote $D_J:=\sup(\{|\xi|\textrm{ s.t. }\hat{g}_{0,J}(\xi)\neq 0\}),~ P_J:=\bigcup_{n=0}^\infty \Lmd_J^n$.

\begin{remark}
    The sequence of frames $\mathcal{G}_J$ is unrelated to the frame collections in Definition 2 of \cite{Wiatowski}. Whereas B{\"o}lcskei and Wiatowski apply different frames to different layers of the same scattering transform, we, instead apply the same frame to each layer of each individual scattering transform, and analyze the behavior of a sequence of scattering transforms, each corresponding to a different frame. For example, the translation-invariant behavior in the setting of the Fourier Scattering Transform that we discuss in Section \ref{Application - Sequences of Fourier Scattering Transforms} can only be described with a sequence of transformations, corresponding to a sequence of semi-discrete frames.
\end{remark}

 \section{Our Result}

Recall that Mallat's result, Theorem \ref{Theorem 2.10 Mallat}, states that, for wavelets satisfying the admissibility condition (Definition \ref{Admissibility_condition}), for each $f\in L^2,$ $$\lim_{J\rightarrow\infty}\Norm{S_J[P_J]f-S_J[P_J]T_cf}\Ltlt\rightarrow0,$$ where $(T_cf)(x):=f(x-c)$.

We will now state and prove the corresponding result for the class of all coherent scattering transform sequences (including Mallat's Wavelet Scattering Transforms), without requiring the admissibility condition.

\begin{thm}\label{MAIN RESULT - UPPER BOUND}\footnote{Note that, since $\tilde{S}[P]$ is non-expansive, $\norm \tilde{S}[P]f- \tilde{S}[P]T_cf \norm\Ltlt$ will also be bounded from above by $2\Norm{f}\Lt$, which, in different situations, may be an either better or worse bound than the one provided in this theorem. The bound provided by the theorem is sharper when $D|c|$ is small.}
    Let $\tilde{S}$ be a scattering transform with an index set $\Lmd$ and an output-generating function $g_0$. Let $D=\sup(\{|\xi|\textrm{ s.t. }\hat{g}_0(\xi)\neq 0\})<\infty$. As before, let $P:=\cup_{n=0}^{\infty} \Lmd^n$. Then, for each $c\in\mathbb{R}^d$, 
    $$\Norm{ \tilde{S}[P]f- \tilde{S}[P]T_cf}\Ltlt \leq  2\pi D|c|\Norm{f}\Lt.$$
\end{thm}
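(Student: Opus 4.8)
The plan is to exploit the fact that every building block of a scattering transform commutes with translations, and then reduce the whole estimate to a one-line Fourier-side bound on a band-limited function. First I would record the (routine, induction-on-layers) observation that $\tilde U[p]$ commutes with $T_c$: the base case $k=0$ is trivial, and for $k\ge 1$ we use that convolution satisfies $(T_c h)*g_{\lambda_k}=T_c(h*g_{\lambda_k})$ and that $|T_c h|=T_c|h|$, so $\tilde U[p]T_cf=T_c\tilde U[p]f$ for every $p\in P$. Consequently, writing $h_p:=\tilde U[p]f*g_0$, we get $\tilde S[P]T_cf=\{T_c h_p:p\in P\}$, and hence
$$\Norm{\tilde S[P]f-\tilde S[P]T_cf}\Ltltsq=\sum_{p\in P}\Norm{h_p-T_c h_p}\Ltsq .$$

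Next I would estimate each summand on the Fourier side. With the convention $\hat h(\xi)=\int h(x)e^{-2\pi i x\cdot\xi}\,dx$, translation acts by $\widehat{T_c h_p}(\xi)=e^{-2\pi i c\cdot\xi}\hat h_p(\xi)$, so by Plancherel
$$\Norm{h_p-T_ch_p}\Ltsq=\int_{\R^d}\bigl|1-e^{-2\pi i c\cdot\xi}\bigr|^2\,|\hat h_p(\xi)|^2\,d\xi .$$
The key point is that $\hat h_p=\widehat{\tilde U[p]f}\cdot\hat g_0$ is supported in $\{|\xi|\le D\}$, since $\hat g_0$ is. On that set the elementary inequality $|1-e^{i\theta}|=2|\sin(\theta/2)|\le|\theta|$ with $\theta=-2\pi c\cdot\xi$ gives $|1-e^{-2\pi i c\cdot\xi}|\le 2\pi|c\cdot\xi|\le 2\pi|c|\,|\xi|\le 2\pi D|c|$. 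Therefore $\Norm{h_p-T_ch_p}\Ltsq\le (2\pi D|c|)^2\Norm{h_p}\Ltsq$.

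Finally I would sum over $p\in P$ and invoke non-expansiveness (Theorem \ref{sqSJ nonexpansive}):
$$\sum_{p\in P}\Norm{h_p-T_c h_p}\Ltsq\le (2\pi D|c|)^2\sum_{p\in P}\Norm{h_p}\Ltsq=(2\pi D|c|)^2\Norm{\tilde S[P]f}\Ltltsq\le (2\pi D|c|)^2\Norm{f}\Ltsq ,$$
and taking square roots yields the claimed bound. I do not expect a genuine obstacle here; the only points requiring a little care are the translation-commutation induction (which relies on $g_0,g_\lambda\in L^1\cap L^2$ so that the convolutions are well defined and $\tilde U[p]f\in L^2$, making $h_p\in L^2$ with band-limited Fourier transform) and pinning down the $2\pi$ factor via the chosen Fourier-transform normalization. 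The substantive idea—that the only frequencies surviving in the output live in $\{|\xi|\le D\}$, where translation by $c$ moves things by at most $2\pi D|c|$—is exactly what makes the admissibility condition unnecessary.
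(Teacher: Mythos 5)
Your proposal is correct and follows essentially the same route as the paper's own proof: commute $T_c$ past the propagator, pass to the Fourier side via Plancherel, use the band-limitation of $\hat g_0$ together with $|1-e^{i\theta}|\le|\theta|$ to bound each summand by $(2\pi D|c|)^2\Norm{h_p}\Ltsq$, and conclude by non-expansiveness. The only difference is that you spell out the induction behind the translation-commutation, which the paper states without proof.
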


\begin{proof}
    Observe that
 
     $$\Norm {\tilde{S}[P] f-\tilde{S}[P] T_c f} \Ltltsq=\sum_{p\in{P}} \Norm{ U[p]f*g_0 - U[p](T_cf)*g_0}\Ltsq.$$

     Denote $U[p]f$ as $u_p$. Since the translation operator $T_c$ commutes both with the absolute value operator and with the convolution with $g_\lambda$  for each $\lambda$, we have $U[p](T_cf)=T_cu_p$.
    Without loss of generality, assume that $\norm f\norm\Lt\neq0.$
    
     By the Plancherel identity, for each $p$, we have $$\norm u_p*g_0-(T_cu)*g_0\norm\Ltsq=\norm \widehat{u}_p(\xi)\widehat g_0(\xi)-e^{-2\pi i\xi\cdot c}\widehat{u}_p(\xi)\widehat g_0(\xi)\norm\Ltsq.$$
     Next, notice that for any real number $r$, $ |e^{ir}-1|\leq|r|.$
     Hence, for each $\xi\in B(D,0)$, since $|\xi\cdot c|\leq D|c|$,$$ |e^{-2\pi i\xi\cdot c}-1|<2\pi D| c|.$$
     Thus, since $\textrm{supp}(\widehat{g}_0)\subseteq B(D,0)$, we have
     $$\Norm{ \widehat{u}_p(\xi)\widehat g_0(\xi)-e^{-2\pi i\xi\cdot c}\widehat{u}_p(\xi)\widehat g_0(\xi)}\Ltsq=\Norm{ (1-e^{-2\pi i\xi\cdot c})\widehat{u}_p(\xi)\widehat g_0(\xi)}\Ltsq\leq$$
     $$\leq (2\pi D|c|)^2{\Norm{\widehat{u}_p\widehat g_0}}\Ltsq.$$
     Moreover, since our scattering transforms are non-expansive, we have 
     $$\norm \tilde{S}[P] f\norm\Ltltsq=\sum_{p\in{P}} \norm u_p*g_0\norm\Ltsq=\sum_{p\in{P}} \norm \hat{u}_p\widehat{g}_0\norm\Ltsq\leq\norm f\norm\Ltsq.$$
     Hence, we have 
     $$\norm \sqS[P] f-\sqS[P] T_c f\norm \Ltltsq=\sum_{p\in{P}}\norm \widehat{u}_p(\xi)\widehat g_0(\xi)-e^{-2\pi i\xi\cdot c}\widehat{u}_p(\xi)\widehat g_0(\xi)\norm\Ltsq\leq$$
     $$\leq(2\pi D |c|)^2\sum_{p\in P} \norm \hat{u}_p\widehat{g}_0\norm\Ltsq=(2\pi D |c|)^2\Norm{\tilde{S}[P]f}\Ltltsq\leq(2\pi D|c|)^2\Norm{f}\Ltltsq.$$
\end{proof}

\begin{cor}\label{Main Result main_result (FINAL)}
    For any coherent sequence of scattering transforms $\tilde{S}_J[P_J]$, for each $ f\in L^2(\mathbb{R}^d),$ and each $ c\in\mathbb{R}^d$,
    $$\lim_{J\rightarrow\infty}\Norm{ \tilde{S}_J[P_J]f- \tilde{S}_J[P_J]T_cf}\Ltlt =0.$$
\end{cor}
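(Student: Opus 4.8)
The plan is to deduce the corollary directly from Theorem \ref{MAIN RESULT - UPPER BOUND} by applying it to each member of the coherent sequence and then letting the output window shrink. First I would observe that, for each fixed $J$, the transform $\tilde{S}_J[P_J]$ is exactly a scattering transform of the kind treated in Theorem \ref{MAIN RESULT - UPPER BOUND}: it arises from the semi-discrete Bessel Sequence $\mathcal{G}_J$, with index set $\Lmd_J$ and output-generating function $g_{0,J}$, and $P_J = \bigcup_{n=0}^\infty \Lmd_J^n$. Since the definition of a coherent sequence requires $\hat{g}_{0,J}$ to be compactly supported, the quantity $D_J = \sup(\{|\xi|\textrm{ s.t. }\hat{g}_{0,J}(\xi)\neq 0\})$ is finite, so the hypothesis $D<\infty$ of the theorem is satisfied with $D = D_J$.

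Next I would record the resulting family of inequalities: applying Theorem \ref{MAIN RESULT - UPPER BOUND} with $D = D_J$ and $P = P_J$ gives, for every $J\in\mathbb{N}$ and every $c\in\mathbb{R}^d$,
$$\Norm{\tilde{S}_J[P_J]f - \tilde{S}_J[P_J]T_cf}\Ltlt \leq 2\pi D_J |c|\,\Norm{f}\Lt.$$
The right-hand side depends on $J$ only through $D_J$, and condition 2 in the definition of a coherent sequence states precisely that $\lim_{J\rightarrow\infty} D_J = 0$. Since $|c|$ and $\Norm{f}\Lt$ are fixed finite constants, the right-hand side tends to $0$ as $J\to\infty$, and a comparison (squeeze) argument then yields $\lim_{J\rightarrow\infty}\Norm{\tilde{S}_J[P_J]f - \tilde{S}_J[P_J]T_cf}\Ltlt = 0$, which is the assertion.

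There is no real obstacle to overcome at this stage: the substantive work — using Plancherel together with the elementary bound $|e^{ir}-1|\le|r|$ to localize the difference $\tilde{S}[P]f - \tilde{S}[P]T_cf$ to the frequency support of $\hat{g}_0$, and then invoking non-expansiveness (Theorem \ref{sqSJ nonexpansive}) to control $\Norm{\tilde{S}[P]f}\Ltlt$ by $\Norm{f}\Lt$ — was already carried out in the proof of Theorem \ref{MAIN RESULT - UPPER BOUND}. The one point worth emphasizing is that the constant in that theorem is \emph{linear} in $D$, so the mere decay of the frequency supports of the output-generating functions, and not any finer structural property such as Mallat's admissibility condition, is what forces translation invariance in the limit. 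The only bookkeeping needed is to verify that the hypotheses of the theorem hold for every $J$, i.e.\ that each $D_J$ is finite, and this is exactly what compact support of $\hat{g}_{0,J}$ guarantees.
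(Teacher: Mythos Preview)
Your proposal is correct and follows essentially the same approach as the paper: apply Theorem \ref{MAIN RESULT - UPPER BOUND} to each $\tilde{S}_J[P_J]$ to obtain the bound $2\pi D_J|c|\,\Norm{f}\Lt$, then invoke condition 2 of the coherent-sequence definition to conclude $D_J\to 0$. The only cosmetic difference is that the paper states the bound in squared form, whereas you write it unsquared; your additional remark that compact support of $\hat{g}_{0,J}$ ensures $D_J<\infty$ is a helpful verification the paper leaves implicit.
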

\begin{proof}\
    This is obtained directly by applying the bound from Theorem \ref{MAIN RESULT - UPPER BOUND}, which states that for each $ J,$
    $$\Norm{ \tilde{S}_J[P_J]f- \tilde{S}_J[P_J]T_cf}\Ltltsq \leq (2\pi D_J |c|)^2\Norm{f}\Ltltsq,$$
    and noticing that $D_J\rightarrow 0$, while all the other factors on the right-hand side do not depend on $J$.
\end{proof}

 \section{Application - Sequences of Fourier Scattering Transforms}\label{Application - Sequences of Fourier Scattering Transforms}
    As mentioned earlier, our main motivation for writing the paper was to prove Corollary \ref{Main Result main_result (FINAL)} for coherent sequences of Fourier Scattering Transforms \cite{CzajaLi}. Like the Wavelet Scattering Transform, the Fourier Scattering Transform is a special case of the Scattering Transform given in Definition \ref{General Scattering Transform}. A major benefit of the Fourier Scattering Transform is that the norm-preservation property is obtained without the need for an admissibility condition. Unfortunately, in the case of the Fourier Scattering Transform, the result on near-diffeomorphism-invariance only holds for nearly band-limited functions in the input. Another limitation of the Fourier Scattering Transform was that, prior to this paper, there has not been a result on the translation-invariant behavior analogous to that of \cite{Mallat}. Thanks to \ref{Main Result main_result (FINAL)}, we have bridged this gap.
    
    We state the definitions of a Uniform Covering Frame and the Fourier Scattering Transform formally below.

    \begin{definition}\textbf{(Uniform Covering Frame \cite{CzajaLi})}
	Suppose that we have a Semi-Discrete Bessel Sequence
	\[
	\mathcal{F}=\{g_0\}\cup\{g_\lambda\colon \lambda\in\Lmd\}.
	\]
	$\mathcal{F}$ is said to be a uniform covering frame if it satisfies the following assumptions. 
	\begin{enumerate}
		\item
		\emph{Assumptions on $g_0$ and $g_\lambda$}. Let $g_0\in L^1(\R^d)\cap L^2(\R^d)$ be such that $\hat{g_0}$ is supported in a neighborhood of the origin and $|\hat{g_0}(0)|=1$. For each $\lambda\in\Lmd$, let $g_\lambda\in L^1(\R^d)\cap L^2(\R^d)$ be such that $\supp(\hat{g_\lambda})$ is compact and connected. 
		\item
		\emph{Uniform covering property}. There exists an $R$ such that, for each $ \lambda$, there exists $ \xi_\lambda\in\R^d$ such that $\supp(\hat{g}_\lambda)\subseteq B(R,\xi_\lambda)$, where $B(a,b)$ denotes the open ball of radius $a$ around point $b$. 
		\item
		\emph{Frame condition}. Assume that, for all $\xi\in\R^d$,
		\begin{equation}
		\label{eq frame} 
		|\hat{g_0}(\xi)|^2+\sum_{\lambda\in\Lmd} |\hat{g_\lambda}(\xi)|^2=1.
		\end{equation}
	\end{enumerate}
    \end{definition}
    This implies that $\mathcal{F}$ is a semi-discrete Parseval frame for $L^2(\R^d)$: For all $f\in L^2(\R^d)$,
		\[
		\|f*g_0\|_{L^2}^2+\sum_{\lambda\in\Lmd} \|f*g_\lambda\|_{L^2}^2 = \|f\|_{L^2}^2.
		\]
    \begin{definition}\textbf{(Fourier Scattering Transform \cite{CzajaLi})}
        A Scattering transform (see Definition \ref{General Scattering Transform}) $\tilde{S}$ associated with a uniform covering frame $\mathcal{F}$ is called a Fourier Scattering Transform.
    \end{definition}
    
    We'll now recall some properties of the Fourier Scattering Transform, proved in \cite{CzajaLi}.    
    Let $\F=\{g_0\}\cup\{g_\lambda\colon \lambda\in\Lmd\}$ be a uniform covering frame, and let $\tilde{S}_\F$ be the Fourier scattering transform. It was proved in \cite{CzajaLi} that
    \textbf{$\tilde{S}_\F$ conserves energy}. In other words,
	 for each $f\in L^2(\R^d)$,
		\[
		\|\tilde{S}_\F(f)\|_{L^2\ell^2}=\|f\|_{L^2}.
		\]
    Moreover, \textbf{$\tilde{S}_\F$ is non-expansive on $L^2(\R^d)$ }. That is,    
		for each $f,g\in L^2(\R^d)$, 
		\[
		\|\tilde{S}_\F(f)-\tilde{S}_\F(g)\|_{L^2\ell^2}\leq \|f-g\|_{L^2}.
		\]
    It was also shown that \textbf{$\tilde{S}_\F$ contracts sufficiently small additive diffeomorphisms of almost band-limited functions}. Namely, let $\varepsilon\in [0,1)$ and $R>0$. There exists a universal constant $C>0$, such that, for all $(\varepsilon,R)$ band-limited $f\in L^2(\R^d)$ and all $\tau\in C^1(\R^d;\R^d)$ with $\|\nabla\tau\|_{L^\infty}\leq 1/(2d)$,
		\[
		\|\tilde{S}_\F(f)-\tilde{S}_\F(T_\tau f)\|_{L^2\ell^2}
		\leq C(R\|\tau\|_{L^\infty}+\varepsilon)\|f\|_{L^2}.
		\]
    The main trait that separates the Fourier Scattering Transform from the Wavelet Scattering Transform is the fact that the $L^2l^2$ norm of ${\tilde{U}[\Lmd^n]f}$ decays exponentially as $n\rightarrow\infty$, which automatically implies norm-preservation for $\tilde{S}_\F$ without the need for an admissibility condition. Specifically, for each uniform covering frame, there exists a constant $C_0\in(0,1)$ such that for any $f\in L^2(\R^d)$ for any $K\geq 1$,
    $$\Norm{\tilde{U}[\Lmd^K]f}\Ltltsq\leq C_0^{K-1}\left(\Norm{f}\Ltsq-\Norm{f*g_0}\Ltltsq\right).$$
    In contrast, while it was shown in \cite{Mallat} that, for the Wavelet Scattering Transform, $\Norm{{U}[\Lmd^n]f}\Ltlt\rightarrow 0$ for admissible wavelets, this convergence can be arbitrarily slow, as shown in \cite{Fuhr}.
    
    The following result holds for norm differences under translation for the Fourier Scattering Transform.
    \begin{thm}\label{FST contracts small translations} \textbf{($\tilde{S}_\F$ contracts sufficiently small translations of $L^2(\R^d)$ \cite{CzajaLi})}
		There exists $C>0$ depending only on $\F$, such that for all $f\in L^2(\R^d)$ and $y\in \R^d$, 
		\[
		\Norm{\tilde{S}_\F(f)-\tilde{S}_\F(T_c f)}\Ltlt
		\leq C|c|\|\nabla f_0\|_{L^1} \|f\|_{L^2}.
		\] 
		
    \end{thm}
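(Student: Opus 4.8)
The plan is to start exactly as in the proof of Theorem~\ref{MAIN RESULT - UPPER BOUND}, but to replace the crude Fourier-support estimate $|1-e^{-2\pi i\xi\cdot c}|\le 2\pi D|c|$ by a convolution estimate that trades the support radius of $\hat g_0$ for $\|\nabla g_0\|_{L^1}$, and then to control the sum over scattering paths by the exponential energy decay recalled above. First I would write
$$\Norm{\tilde{S}_\F f-\tilde{S}_\F T_c f}\Ltltsq=\sum_{p\in P}\Norm{\tilde{U}[p]f*g_0-\tilde{U}[p](T_cf)*g_0}\Ltsq,$$
set $u_p:=\tilde{U}[p]f$, and note --- as in Theorem~\ref{MAIN RESULT - UPPER BOUND} --- that $T_c$ commutes with the modulus and with each convolution, so $\tilde{U}[p](T_cf)=T_cu_p$ and the $p$-th term equals $\Norm{(u_p-T_cu_p)*g_0}\Ltsq=\Norm{u_p*(g_0-T_cg_0)}\Ltsq$.

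The key new step is the estimate $\|g_0-T_cg_0\|_{L^1}\le|c|\,\|\nabla g_0\|_{L^1}$: write $g_0(x)-g_0(x-c)=\int_0^1 c\cdot(\nabla g_0)(x-tc)\,dt$ and apply Minkowski's integral inequality together with translation-invariance of Lebesgue measure. (This uses that $\nabla g_0\in L^1$, which holds under mild smoothness of $\hat g_0$; otherwise the asserted bound is vacuous.) Young's inequality for $L^2*L^1\to L^2$ and $\Norm{u_p}\Lt\le\Norm{f}\Lt$ (non-expansiveness of each scattering layer) then give, for every path $p$,
$$\Norm{u_p*(g_0-T_cg_0)}\Lt\le\Norm{u_p}\Lt\,\|g_0-T_cg_0\|_{L^1}\le|c|\,\|\nabla g_0\|_{L^1}\,\Norm{u_p}\Lt.$$

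It remains to sum over $p$. Grouping by path length, $\sum_{p\in P}\Norm{u_p}\Ltsq=\sum_{n=0}^{\infty}\Norm{\tilde{U}[\Lmd^n]f}\Ltltsq$, and the exponential bound $\Norm{\tilde{U}[\Lmd^n]f}\Ltltsq\le C_0^{\,n-1}\Norm{f}\Ltsq$ for $n\ge 1$ (plus the $n=0$ term $\Norm{f}\Ltsq$) makes this series finite, at most $\tfrac{2-C_0}{1-C_0}\Norm{f}\Ltsq$. Assembling the three steps,
$$\Norm{\tilde{S}_\F f-\tilde{S}_\F T_c f}\Ltltsq\le|c|^2\|\nabla g_0\|_{L^1}^2\sum_{p\in P}\Norm{u_p}\Ltsq\le\frac{2-C_0}{1-C_0}\,\|\nabla g_0\|_{L^1}^2\,|c|^2\,\Norm{f}\Ltsq,$$
which is the asserted inequality with $C=\sqrt{(2-C_0)/(1-C_0)}\,\|\nabla g_0\|_{L^1}$ depending only on $\F$ (one may instead keep the zeroth-order term $\Norm{f*g_0-T_c(f*g_0)}\Lt$ separate and phrase it through $f_0:=f*g_0$).

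I expect the one genuinely non-routine point to be the convergence of $\sum_{p\in P}\Norm{u_p}\Ltsq$: for a general semi-discrete Bessel sequence the layer energies $\Norm{\tilde{U}[\Lmd^n]f}\Ltltsq$ need not be summable --- for the Wavelet Scattering Transform they may decay arbitrarily slowly (\cite{Fuhr}) --- which is precisely why Theorem~\ref{MAIN RESULT - UPPER BOUND} must settle for a bound involving the support radius of $\hat g_0$ rather than $\|\nabla g_0\|_{L^1}$. It is the exponential energy decay, special to the Fourier Scattering Transform, that converts the crude per-path estimate into a global one; the only other technical point is the regularity of $g_0$ guaranteeing $\nabla g_0\in L^1$.
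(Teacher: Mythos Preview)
The paper does not itself prove Theorem~\ref{FST contracts small translations}; it is quoted from \cite{CzajaLi}, after which the paper only offers its own alternative bound via Theorem~\ref{MAIN RESULT - UPPER BOUND}. There is therefore no in-paper proof to compare against.

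Your argument is correct and is essentially the natural proof of the Czaja--Li bound: move the translation onto the low-pass filter so that each term becomes $u_p*(g_0-T_cg_0)$, bound $\|g_0-T_cg_0\|_{L^1}\le |c|\,\|\nabla g_0\|_{L^1}$ by the integral mean-value identity, apply Young's inequality pathwise, and then sum $\sum_p\|u_p\|_{L^2}^2$ using the exponential layer-energy decay that is specific to uniform covering frames. (A cosmetic point: in the stated inequality $\|\nabla f_0\|_{L^1}$ appears as a separate factor, so the frame-dependent constant is $C=\sqrt{(2-C_0)/(1-C_0)}$ rather than $C=\sqrt{(2-C_0)/(1-C_0)}\,\|\nabla g_0\|_{L^1}$.) Your final remark pinpoints the genuine content: summability of $\sum_p\|u_p\|_{L^2}^2$ is exactly what fails in the general or wavelet setting, and the paper's Theorem~\ref{MAIN RESULT - UPPER BOUND} avoids needing it by applying the Fourier-side bound $|1-e^{-2\pi i\xi\cdot c}|\le 2\pi D|c|$ \emph{with} the factor $\hat g_0$ still present, so that only $\sum_p\|\hat u_p\hat g_0\|_{L^2}^2=\|\tilde S[P]f\|_{L^2\ell^2}^2\le\|f\|_{L^2}^2$ is required.
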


    Applying the bound in Theorem \ref{MAIN RESULT - UPPER BOUND}, we can provide an alternative to the upper bound in Theorem $\ref{FST contracts small translations}$:
        $$\Norm{\tilde{S}_\F(f)-\tilde{S}_\F(T_c f)}\Ltlt\leq {2\pi D|c|\Norm{f}\Lt}.$$
    In particular, applying Corollary \ref{Main Result main_result (FINAL)} to the Fourier Scattering Transform, we obtain:
    \begin{cor}
        Suppose $\{F_J\}_{J\in\N}$ is a coherent sequence of Uniform Covering Frames, with index sets $\Lmd_J$, output-generating functions $g_{0,J}$, and with the path set $P_J:=\cup_{k=0}^\infty \Lmd_J^k$, Let $\tilde{S}_{\mathcal{F}_J}$ be the corresponding Fourier Scattering Transforms. Then, for every $ f\in L^2(\mathbb{R}^d),\textrm{ for every } c\in\mathbb{R}^d$,
        $$\lim_{J\rightarrow\infty}\Norm{ \tilde{S}_{\mathcal{F}_J}[P_J]f- \tilde{S}_{\mathcal{F}_J}[P_J]T_cf}\Ltlt =0.$$        
    \end{cor}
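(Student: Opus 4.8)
The plan is to read this corollary as the literal specialization of Corollary \ref{Main Result main_result (FINAL)} to uniform covering frames. First I would recall that, by the definition of the Fourier Scattering Transform, each $\tilde{S}_{\mathcal{F}_J}$ is exactly the General Scattering Transform (Definition \ref{General Scattering Transform}) associated with the uniform covering frame $\mathcal{F}_J$. Since a uniform covering frame satisfies the frame condition (\ref{eq frame}) with equality, the quantity $\sum_{g\in\mathcal{F}_J}|\hat g(\xi)|^2$ equals $1$ a.e., so in particular $\mathcal{F}_J$ is a semi-discrete Bessel Sequence in the sense of Definition \ref{semi-discrete Bessel Sequence}. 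Hence Theorem \ref{MAIN RESULT - UPPER BOUND} and Corollary \ref{Main Result main_result (FINAL)} are applicable to $\tilde{S}_{\mathcal{F}_J}[P_J]$ with no modification.

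Next I would verify that a coherent sequence of uniform covering frames $\{\mathcal{F}_J\}$ yields a coherent sequence of scattering transforms in the precise sense required by Corollary \ref{Main Result main_result (FINAL)}. This reduces to checking the two coherence conditions: (i) the peripheral elements are nested, $\mathcal{F}_J\setminus\{g_{0,J}\}\subseteq\mathcal{F}_{J+1}\setminus\{g_{0,J+1}\}$, and (ii) $\lim_{J\to\infty} D_J=0$, where $D_J=\sup(\{|\xi|\textrm{ s.t. }\hat g_{0,J}(\xi)\neq 0\})$. Both are built into the hypothesis that the sequence is coherent; moreover, by the axioms of a uniform covering frame, $\hat g_{0,J}$ is supported in a neighborhood of the origin, hence compactly supported, so $D_J<\infty$ for every $J$ and Theorem \ref{MAIN RESULT - UPPER BOUND} furnishes a finite bound at each level.

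Finally, with these identifications made, I would simply invoke Corollary \ref{Main Result main_result (FINAL)} for the coherent sequence $\tilde{S}_{\mathcal{F}_J}[P_J]$, obtaining
$$\lim_{J\to\infty}\Norm{\tilde{S}_{\mathcal{F}_J}[P_J]f-\tilde{S}_{\mathcal{F}_J}[P_J]T_c f}\Ltlt=0$$
for every $f\in L^2(\mathbb{R}^d)$ and every $c\in\mathbb{R}^d$, which is the claim. There is essentially no obstacle here beyond bookkeeping: the only point that deserves a sentence of care is confirming that imposing the uniform covering frame axioms on each $\mathcal{F}_J$ does not weaken either coherence condition — which is immediate, since those axioms only further constrain each individual frame and say nothing about the relation between consecutive frames, whereas conditions (i) and (ii) are precisely the conditions appearing in the definition of a coherent sequence.
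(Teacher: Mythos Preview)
Your proposal is correct and follows essentially the same approach as the paper: the paper's proof is a single sentence stating that this is a special case of Corollary \ref{Main Result main_result (FINAL)} when the underlying semi-discrete Bessel sequences are uniform covering frames. Your version simply spells out the bookkeeping (that uniform covering frames satisfy the Bessel bound (\ref{General_scattering_upper_frame_bound}) and that the coherence hypotheses transfer verbatim), which the paper leaves implicit.
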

    \begin{proof}
        This is a special case of Corollary \ref{Main Result main_result (FINAL)}, when the underlying semi-discrete Bessel Sequences are Uniform Covering Frames.
    \end{proof}
    Thus, we have been able to show an analogous result to Mallat's limiting translation invariance (of the Wavelet Scattering Transform) for the Fourier Scattering Transform.

 \end{document}